\documentclass[12pt,leqno]{amsart}
\usepackage{etoolbox}
\usepackage{graphicx}
\usepackage[english]{babel}
\usepackage[utf8]{inputenc}
\usepackage[noadjust]{cite}
\usepackage{amsmath, amsthm}
\usepackage{amsfonts, amssymb}
\usepackage{bm}
\usepackage{tikz}
\usetikzlibrary{arrows,automata,calc}
\usepackage{pgfplots}
\pgfplotsset{compat=1.17}
\usepackage[T1]{fontenc}
\usepackage[tracking]{microtype}
\usepackage[ttscale=0.81]{libertine}
\usepackage[libertine]{newtxmath}
\usepackage{geometry}

\usepackage{hyperref}
\hypersetup{
  colorlinks=true,
  linkcolor=green!30!black,
  linktoc=all,
  citecolor=green!50!black,
  bookmarksnumbered=true
}

\numberwithin{equation}{section}
\usepackage{thmtools}
\usepackage{thm-restate}

\newtheorem{theorem}{Theorem}[section]

\newtheorem{lemma}[theorem]{Lemma}
\newtheorem{corollary}[theorem]{Corollary}

\newtheorem{question}[theorem]{Question}
\theoremstyle{definition}

\newtheorem{remark}[theorem]{Remark}

\newcommand{\Rel}[2]{\operatorname{Rel}(#1;\, #2)}
\newcommand{\sRel}[3]{\operatorname{splitRel}_{#2}(#1;\, #3)}

\begin{document}

\title{Density of reliability roots of simple graphs in the unit disk}
\author{Pjotr Buys}
\address{Korteweg-de Vries Institute for Mathematics, University of Amsterdam}
\email{pjotr.buys@gmail.com}
\date{\today}

\begin{abstract}
Brown and Colbourn (1992) showed that the complex roots of the reliability polynomial of connected multigraphs are dense in the unit disk and that the closure of the real roots is $[-1,0] \cup \{1\}$. We prove the simple graph analogues of both results, confirming a recent conjecture of Brown and McMullin. The proof uses the family of graphs $C_m[K_n]$ obtained by substituting each edge of a cycle $C_m$ with a complete graph $K_n$, and relies on the asymptotic behavior of the reliability and split reliability polynomials of $K_n$.
\end{abstract}

\maketitle

\section{Introduction}

Let $G = (V,E)$ be a connected (multi-)graph. If each edge fails independently with probability $q$, the probability that the operational edges still form a connected spanning subgraph is
\[
	\Rel{G}{q} = \sum_{\substack{S \subseteq E \\ (V,S) \text{ connected}}} (1-q)^{|S|} q^{|E| - |S|}.
\]
This polynomial in $q$ is called the (all-terminal) \emph{reliability polynomial} of $G$. The reliability polynomial was introduced in~\cite{moore1956reliable} for two-terminal networks and generalized to arbitrary coherent systems in~\cite{birnbaum1961multicomponent}, of which all-terminal reliability is a special case; see~\cite{colbourn1987combinatorics} for a comprehensive treatment.

Since $\Rel{G}{q}$ is a polynomial, one can consider its complex roots. Define
\[
	\mathcal{Z}_{\mathrm{multi}} = \{q \in \mathbb{C} : \Rel{G}{q} = 0 \text{ for some connected multigraph } G\}
\]
\[
	\mathcal{Z}_{\mathrm{simple}} = \{q \in \mathbb{C} : \Rel{G}{q} = 0 \text{ for some connected simple graph } G\}.
\]
Motivated by various unimodality conjectures for the coefficient sequences of the reliability polynomial, Brown and Colbourn~\cite{brown1992roots} initiated the study of these roots and conjectured that all reliability roots lie in the closed unit disk, i.e.\ $\mathcal{Z}_{\mathrm{multi}} \subseteq \overline{\mathbb{D}}$. Using cycles with bundled edges (see \autoref{fig:constructions}, left), they also showed that reliability roots of multigraphs are dense in the unit disk, i.e.\ $\overline{\mathbb{D}} \subseteq \overline{\mathcal{Z}_{\mathrm{multi}}}$. Using the same family, they were also able to determine that the closure of the real reliability roots is given by $\overline{\mathcal{Z}_{\mathrm{multi}} \cap \mathbb{R}} = [-1,0] \cup \{1\}$.

Royle and Sokal~\cite{royle2004brown} disproved the Brown-Colbourn conjecture by exhibiting a simple planar graph with zeros strictly outside the unit disk. Zeros of even larger modulus were subsequently found in~\cite{brown2017roots}. Moreover, Bencs, Piombi, and Regts~\cite{bencs2025zeros} showed that $\overline{\mathcal{Z}_{\mathrm{multi}}}$ contains an open neighborhood of every $q \in \partial \mathbb{D}$ with $q^k \neq 1$ for $k = 1, \ldots, 4$. This means that $\mathcal{Z}_{\mathrm{multi}} \not\subseteq \mathbb{D}$ in almost every direction. Whether $\mathcal{Z}_{\mathrm{multi}}$ is bounded remains open.

As mentioned, simple graphs can also have reliability roots outside the unit disk. Regarding density inside the disk, however, less is known than for multigraphs. The sets $\mathcal{Z}_{\mathrm{multi}}$ and $\mathcal{Z}_{\mathrm{simple}}$ do not coincide, e.g.\ $-1 \in \mathcal{Z}_{\mathrm{multi}} \setminus \mathcal{Z}_{\mathrm{simple}}$~\cite{brown2020rational}. Brown and McMullin~\cite{brown2026realreliabilityrootsgraphs} proved that the real reliability roots of simple graphs are dense on an explicit subinterval $[\beta, 0] \subset [-1,0]$, where $\beta \approx -0.5707$, and conjectured that $\overline{\mathcal{Z}_{\mathrm{simple}} \cap \mathbb{R}} = [-1,0] \cup \{1\}$. The purpose of this paper is to confirm this conjecture and more generally prove the simple graph analogue of~\cite[Prop.~5.1]{brown1992roots}.\footnote{While finalizing this manuscript, we learned that Omar~\cite{omar2026real} has independently proved item~2 of \autoref{thm:main}.}
\begin{restatable}{theorem}{maintheorem}\label{thm:main}
	The reliability roots of simple graphs satisfy the following.
	\begin{enumerate}
		\item Reliability roots of simple graphs are dense in the unit disk, i.e. $\overline{\mathbb{D}} \subseteq \overline{\mathcal{Z}_{\mathrm{simple}}}$.
		\item Real reliability roots are dense in $[-1,0]$ and thus 
			$\overline{\mathcal{Z}_{\mathrm{simple}} \cap \mathbb{R}} = [-1,0] \cup \{1\}$.
	\end{enumerate}
\end{restatable}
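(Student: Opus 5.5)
We use the family $C_m[K_n]$ from the abstract: take the cycle $C_m$ and replace each edge $uv$ by a copy of $K_n$ in which $u$ and $v$ are two distinguished vertices. The copies share only the cycle vertices, so the graph is simple. We will compute its reliability exactly, isolate a limiting equation as $n\to\infty$, and then find roots by a Beraha--Kahane--Weiss type argument in $m$.

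\textbf{Step 1: exact formula.} For a two-terminal block $H$ with terminals $u,v$, write $\Rel{H}{q}$ for its reliability and $\sRel{H}{u,v}{q}$ for its split reliability. The latter is the probability that the operational edges form exactly two components, one containing $u$ and one containing $v$. Now consider a spanning subgraph of a cycle of blocks. It is connected exactly when either every block is connected, or exactly one block is split and all the others are connected. Hence
\[
\Rel{C_m[K_n]}{q} = R^m + m\,S\,R^{m-1} = R^{m-1}(R + mS),
\]
where $R=\Rel{K_n}{q}$ and $S=\sRel{K_n}{u,v}{q}$.

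\textbf{Step 2: reduce to a ratio.} The roots of $\Rel{C_m[K_n]}{q}$ include the zeros of $R+mS$, that is, the solutions of
\[
\frac{R(q)}{S(q)} = -m .
\]
Set $f_n(q) = R(q)/S(q)$. The goal is to show that every point of $\overline{\mathbb D}$ is approximated by solutions of $f_n(q) = -m$ for suitable $n$ and $m$.

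\textbf{Step 3: asymptotics of $f_n$ (the main obstacle).} For $|q|<1$ and $n\to\infty$, one expects $R \to 1$. The dominant split configurations should be those where one of $u,v$ is isolated, giving
\[
S \approx 2q^{\,n-1}(1+o(1)).
\]
So we expect $f_n(q) \approx \tfrac12 q^{-(n-1)}$, uniformly on compact subsets of $\mathbb D\setminus\{0\}$. This must be proved with explicit error bounds. The natural tool is the standard recursion for the probability that $K_n$ is connected, obtained by conditioning on the component of a fixed vertex:
\[
1-\Rel{K_k}{q} = \sum_{j<k}\binom{k-1}{j-1}\Rel{K_j}{q}\,q^{\,j(k-j)} .
\]
A similar expansion handles $S$. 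The terms $q^{j(n-j)}$ with $2\le j\le n-2$ are $O(|q|^{2n-4})$ and hence negligible against $q^{n-1}$ once $|q|$ is bounded away from $1$. What remains is careful control of the binomial factors uniformly for $|q|\le r<1$.

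\textbf{Step 4: solve $q^{n-1}\approx -1/(2m)$.} The equation
\[
q^{n-1} = -\frac{1}{2m}
\]
has $n-1$ roots equally spaced on the circle of radius $(2m)^{-1/(n-1)}$. Given a target $q_0$ with $0<|q_0|<1$, choose $n$ large and $m \approx \tfrac12|q_0|^{-(n-1)}$. The circle then has radius close to $|q_0|$, and the mesh of the roots is $2\pi/(n-1)$, so some exact root lies near $q_0$. Rouché's theorem on a small disk, combined with the estimate from Step 3, transfers this to a genuine root of $R+mS$ near $q_0$. The point $0$ is a limit of such roots, and $\partial\mathbb D$ lies in the closure, which proves item~1.

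\textbf{Step 5: real roots.} For $q_0\in(-1,0)$, take $n-1$ odd so that $q^{n-1}$ is negative for negative $q$. Then $f_n(q) + m$ is real on $(-1,0)$. It changes sign near $q_0$ for suitable $m$, because $\tfrac12|q|^{-(n-1)}$ sweeps monotonically through $m$. The intermediate value theorem gives a real root near $q_0$, so real roots are dense in $[-1,0]$. It is known that reliability roots never lie in $(0,1)$, in $(1,\infty)$, or in $(-\infty,-1)$; the last holds since $\mathcal{Z}_{\mathrm{simple}}\cap\mathbb R\subseteq[-1,0]\cup\{1\}$ by Brown--Colbourn. The root $q=1$ is always attained, for instance by a tree. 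Together these give item~2.

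The delicate part is Step 3: obtaining the asymptotic $S \sim 2q^{n-1}$ uniformly on $|q|\le r$ for every $r<1$, including negative $q$ near $-1$, where cancellations in the connectivity recursion must be bounded carefully.
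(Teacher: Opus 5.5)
Your architecture is the paper's: the same family $C_m[K_n]$ (indexed there as $C_m[K_{n+1}]$), the factorization $R^{m-1}(R+mS)$, the asymptotics $R\to 1$ and $S/q^{n-1}\to 2$, the choice $m\approx\tfrac12|q_0|^{-(n-1)}$, Rouch\'e on a small disk, and the intermediate value theorem on $(-1,0)$. The gap is Step~3. It is the only genuinely analytic input, and you leave it unproved. Moreover, the mechanism you point to is not the one that works. Bounding each middle term by $O(|q|^{2n-4})$ ignores the binomial weights. Summed over $2\le j\le n-2$ it gives about $2^{n}|q|^{2n-4}$, which relative to $|q|^{n-1}$ is of order $(2|q|)^{n}$ and blows up for $|q|>\tfrac12$.

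Two ingredients are missing.

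\textbf{An a priori bound.} For complex or negative $q$, the values $\Rel{K_j}{q}$ on the right-hand side of the recursion are not probabilities. Before the recursion gives anything, you need $\sup_j\sup_{|q|\le\rho}|\Rel{K_j}{q}|<\infty$. The paper proves this by induction. Choose $N$ with $a_n=\sum_{s=1}^{n-1}\binom{n-1}{s-1}\rho^{s(n-s)}\le\tfrac12$ for $n\ge N$, and choose $M\ge 2$ bounding $\Rel{K_1}{\cdot},\dots,\Rel{K_N}{\cdot}$ on the disk. Then $|\Rel{K_n}{q}|\le 1+a_nM\le M$.

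\textbf{Controlling the binomial factors.} They are beaten by the quadratic growth of the exponent. Using the symmetry $s\mapsto n-s$ and $s(n-s)\ge sn/2$ for $s\le n/2$,
\[
\sum_{s=1}^{n-1}\binom{n}{s}\rho^{s(n-s)}\le 2\bigl[(1+\rho^{n/2})^n-1\bigr]\le 2\bigl(e^{n\rho^{n/2}}-1\bigr)\longrightarrow 0 .
\]
Since $a_{n+1}\le\rho^n+\sum_{s=1}^{n-1}\binom{n}{s}\rho^{s(n-s)}$, this gives $a_n\to0$, hence $|1-\Rel{K_n}{q}|\le Ma_n\to0$. With $n$ replaced by $n-2$, the same sum bounds the middle terms of $S/q^{n-1}$ by $M^2$ times a quantity tending to $0$. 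No cancellation is used anywhere, so your concern about negative $q$ near $-1$ is unfounded: a target $q_0$ near $-1$ only requires choosing $\rho\in(|q_0|,1)$.

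Steps~4 and~5 are right in outline but need the quantitative form the paper gives. Let $z$ be a root of $1+2mq^{n-1}$ near $q_0$, and work on the disk $\{z(1+u):|u|\le\frac{1}{n-1}\}$. On its boundary, $|1-(1+u)^{n-1}|\ge 3-e$ and $2m|q|^{n-1}\le e$. Writing $R=1+\alpha$ and $S=2q^{n-1}(1+\beta)$, the difference satisfies $|R+mS-(1+2mq^{n-1})|\le|\alpha|+e|\beta|$, which tends to $0$, so Rouch\'e applies. For real roots, ``sweeps monotonically'' does not by itself guarantee that the sign change survives the error. Evaluate instead at $z(1\pm\frac{1}{n-1})$, where the model tends to $1-e$ and $1-e^{-1}$ respectively.
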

\subsection{Split reliability}\label{sec:splitrel}
Many results on the location of reliability roots involve replacing edges of a host graph by another two-terminal graph, sometimes called a gadget. A two-terminal graph is a connected graph $H$ with two distinct distinguished vertices $u$ and $v$. Given a graph $G$ and a two-terminal graph $(H,u,v)$, we write $G[H(u,v)]$ for the graph obtained by replacing each edge of $G$ with a copy of $H$, identifying the endpoints of the edge with $u$ and $v$. Strictly speaking this requires a choice of orientation on each edge of $G$, but this will be irrelevant for our applications. We often write $G[H]$ when the terminals are clear from context. 

To study the effect of such a gadget implementation on the reliability polynomial, the \emph{split reliability polynomial} of a two-terminal graph $(H,u,v)$ was introduced in~\cite{brown2017roots}. This polynomial is denoted by $\sRel{H}{u,v}{q}$ and represents the probability that the operational edges induce exactly two components with $u$ and $v$ in different components:
\[
	\sRel{H}{u,v}{q} = \sum_{S} (1-q)^{|S|} q^{|E(H)| - |S|},
\]
where the sum ranges over all $S \subseteq E(H)$ such that $(V(H), S)$ has exactly two components with $u$ and $v$ in different components. We simply write $\sRel{H}{}{q}$ when the terminals are clear from context. The effect of an edge substitution is to replace the failure probability $q$ by an effective failure probability determined by the gadget~\cite[Eq.~(2.3)]{bencs2025zeros}:
\[
	\Rel{G[H]}{q} = \bigl(\Rel{H}{q}+\sRel{H}{}{q}\bigr)^{|E(G)|} \cdot \operatorname{Rel}\!\left(G;\, \frac{\sRel{H}{}{q}}{\Rel{H}{q}+\sRel{H}{}{q}}\right).
\]

\subsection{Proof strategy}\label{sec:strategy}
Let $K_2^{\parallel n}$ denote the multigraph consisting of two vertices and $n$ parallel edges. Brown and Colbourn~\cite{brown1992roots} used the family of these gadgets implemented in cycles $C_m[K_2^{\parallel n}]$ to prove the multigraph version of \autoref{thm:main}. In the present paper, the role of $K_2^{\parallel n}$ is played by the complete graph $K_{n+1}$, making the resulting implemented graphs simple; see \autoref{fig:constructions}. 

The reason that $K_{n+1}$ can play the same role as $K_2^{\parallel n}$ is that both families share the same asymptotic behavior for large $n$ when $q$ lies inside the disk. Namely, for both families, $\Rel{H_n}{q}$ converges to $1$ and $\sRel{H_n}{}{q} \sim c \cdot q^n$ for a nonzero constant $c$; see \autoref{lem:limKn2}. We show in \autoref{lem:rootslemma} that this is sufficient to prove density of zeros.

\begin{figure}[ht]
\centering
\begin{tabular}{c@{\qquad\qquad}c}
\begin{tikzpicture}[
    main/.style={circle, fill=black, inner sep=1.2pt},
    baseline={(0,0)}
]

\def\n{7}
\def\k{5}
\def\radius{2}

\foreach \i in {1,...,\n} {
    \pgfmathsetmacro{\angle}{90 + (\i-1)*360/\n}
    \coordinate (v\i) at (\angle:\radius);
}

\foreach \i in {1,...,\n} {
    \pgfmathtruncatemacro{\j}{mod(\i,\n)+1}
    \foreach \e in {1,...,\k} {
        \pgfmathsetmacro{\bendangle}{(\e - (\k+1)/2) * 12}
        \draw[black, semithick] (v\i) to[bend left=\bendangle] (v\j);
    }
}

\foreach \i in {1,...,\n} {
    \node[main] at (v\i) {};
}

\end{tikzpicture}%
 & \begin{tikzpicture}[
    main/.style={circle, fill=black, inner sep=1.2pt},
    internal/.style={circle, fill=black, inner sep=1.2pt},
    baseline={(0,0)}
]

\def\n{7}
\def\radius{1.6}
\def\pscale{0.5}

\foreach \i in {1,...,\n} {
    \pgfmathsetmacro{\angle}{90 + (\i-1)*360/\n}
    \coordinate (v\i) at (\angle:\radius);
}

\foreach \i in {1,...,\n} {
    \pgfmathtruncatemacro{\j}{mod(\i,\n)+1}
    \pgfmathsetmacro{\ai}{90 + (\i-1)*360/\n}
    \pgfmathsetmacro{\aj}{90 + (\j-1)*360/\n}
    \pgfmathsetmacro{\xi}{\radius*cos(\ai)}
    \pgfmathsetmacro{\yi}{\radius*sin(\ai)}
    \pgfmathsetmacro{\xj}{\radius*cos(\aj)}
    \pgfmathsetmacro{\yj}{\radius*sin(\aj)}
    \pgfmathsetmacro{\mx}{(\xi+\xj)/2}
    \pgfmathsetmacro{\my}{(\yi+\yj)/2}
    \pgfmathsetmacro{\dx}{\xj-\xi}
    \pgfmathsetmacro{\dy}{\yj-\yi}
    \pgfmathsetmacro{\s}{sqrt(\dx*\dx+\dy*\dy)}
    \pgfmathsetmacro{\sr}{\s/2*\pscale}
    \pgfmathsetmacro{\tx}{\dx/\s}
    \pgfmathsetmacro{\ty}{\dy/\s}
    \pgfmathsetmacro{\mlen}{sqrt(\mx*\mx+\my*\my)}
    \pgfmathsetmacro{\nx}{\mx/\mlen}
    \pgfmathsetmacro{\ny}{\my/\mlen}
    \pgfmathsetmacro{\ra}{\s/2}
    \foreach \k in {1,...,4} {
        \pgfmathsetmacro{\theta}{\k*180/5}
        \pgfmathsetmacro{\wx}{\mx + \ra*(cos(\theta)*\tx + sin(\theta)*\nx)}
        \pgfmathsetmacro{\wy}{\my + \ra*(cos(\theta)*\ty + sin(\theta)*\ny)}
        \coordinate (w\i-\k) at (\wx, \wy);
    }
    \draw[black, semithick] (v\i) -- (v\j);
    \draw[black, thin] (v\i) -- (w\i-1);
    \draw[black, thin] (v\i) -- (w\i-2);
    \draw[black, thin] (v\i) -- (w\i-3);
    \draw[black, thin] (v\i) -- (w\i-4);
    \draw[black, thin] (v\j) -- (w\i-1);
    \draw[black, thin] (v\j) -- (w\i-2);
    \draw[black, thin] (v\j) -- (w\i-3);
    \draw[black, thin] (v\j) -- (w\i-4);
    \draw[black, thin] (w\i-1) -- (w\i-2);
    \draw[black, thin] (w\i-1) -- (w\i-3);
    \draw[black, thin] (w\i-1) -- (w\i-4);
    \draw[black, thin] (w\i-2) -- (w\i-3);
    \draw[black, thin] (w\i-2) -- (w\i-4);
    \draw[black, thin] (w\i-3) -- (w\i-4);
    \node[internal] at (w\i-1) {};
    \node[internal] at (w\i-2) {};
    \node[internal] at (w\i-3) {};
    \node[internal] at (w\i-4) {};
}

\foreach \i in {1,...,\n} {
    \node[main] at (v\i) {};
}

\end{tikzpicture}%
 \\[0.5cm]
$C_7[K_2^{\parallel 5}]$ & $C_7[K_6]$
\end{tabular}
\caption{Left: an example of the family of graphs $C_m[K_2^{\parallel n}]$ used in~\cite{brown1992roots} to prove the density of reliability roots for multigraphs. Right: an example of the family $C_m[K_{n+1}]$ used in this paper to prove the analogous result for simple graphs.}
\label{fig:constructions}
\end{figure}
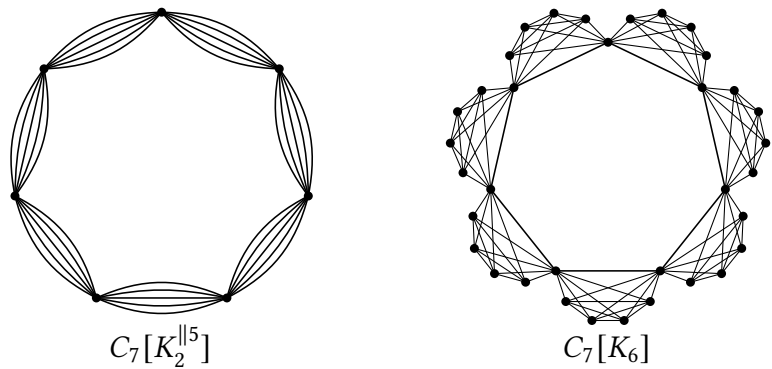

\subsection{Formalization}
The density of real reliability roots of simple graphs in $[-1,0]$ (\autoref{thm:main}, item 2) has been formally verified in the Lean~4 proof assistant. The formalization is available at \url{https://github.com/Pjotr5/ReliabilityRoots}. The main obstacle to formalizing item~1 is that Rouch\'{e}'s theorem has not yet been verified in Lean; the author is currently working on adding this.

\section{The (split) reliability polynomial of a complete graph} 
In this section we determine the asymptotic behavior of both the reliability polynomial and the split reliability polynomial of the complete graph $K_n$ as $n \to \infty$ for $q$ strictly inside the unit disk. As a consequence, we show that the real reliability roots of complete graphs accumulate at $-1$.
\begin{lemma}
	\label{lem:limKn2}
	Let $q \in \mathbb{C}$ with $|q| < 1$. Then
	\[
		\lim_{n \to \infty} \Rel{K_n}{q}  = 1 
		\quad\text{ and }\quad \lim_{n \to \infty} \sRel{K_n}{}{q} / q^{n-1} = 2.
	\]
	The convergence is uniform on compact subsets of the open unit disk $\mathbb{D}$.
\end{lemma}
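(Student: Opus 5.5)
We will prove both limits from the standard recursion that decomposes according to the component of a fixed vertex. Write $R_n = \Rel{K_n}{q}$ with $R_1 = 1$, and set $p = 1-q$. Conditioning on the vertex set of the component containing vertex $1$ gives the identity
\[
	1 = \sum_{k=1}^{n} \binom{n-1}{k-1} R_k\, q^{k(n-k)},
\]
because the events indexed by this vertex set partition the probability space. The factor $q^{k(n-k)}$ accounts for the failure of every edge leaving the component, and the edges inside the complement are left unrestricted. This rearranges to
\[
	1 - R_n = \sum_{k=1}^{n-1} \binom{n-1}{k-1} R_k\, q^{k(n-k)},
\]
and the identity holds for complex $q$ because both sides are polynomials.

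For the split reliability of $K_n$ with terminals $u,v$, we sum over the vertex set $A$ of the component of $u$, where $u \in A$, $v \notin A$ and $|A| = k$. This gives
\[
	\sRel{K_n}{}{q} = \sum_{k=1}^{n-1} \binom{n-2}{k-1} R_k R_{n-k}\, q^{k(n-k)}.
\]
The terms $k=1$ and $k=n-1$ each contribute $R_{n-1} q^{n-1}$. Hence
\[
	\frac{\sRel{K_n}{}{q}}{q^{n-1}} = 2R_{n-1} + \sum_{k=2}^{n-2} \binom{n-2}{k-1} R_k R_{n-k}\, q^{k(n-k)-(n-1)},
\]
so the second limit will follow from the first once the remaining sum is shown to tend to $0$.

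The main obstacle is that the $R_k$ are not a priori bounded for complex $q$. We first establish a uniform bound $|R_k| \le C$ for all $k$, on a compact set where $|q| \le r < 1$. We argue by strong induction using the identity for $1 - R_n$: if $|R_k| \le C$ for all $k < n$, then
\[
	|1 - R_n| \le C \sum_{k=1}^{n-1} \binom{n-1}{k-1} r^{k(n-k)} =: C\,\varepsilon_n .
\]
We need $\varepsilon_n \to 0$. Since $k(n-k) \ge \min(k,n-k)\,n/2$, the terms with $k$ and with $n-k$ small dominate. The $j$-th term from either end is at most $n^{j} r^{jn/2}$, so
\[
	\varepsilon_n \le 2\sum_{j\ge1} \bigl(n r^{n/2}\bigr)^j \le \frac{4n r^{n/2}}{1 - n r^{n/2}} \to 0 .
\]
Choose $N$ with $\varepsilon_n \le 1/2$ for $n \ge N$, and take $C \ge \max(2, \max_{k<N} \sup |R_k|)$. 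Then for $n \ge N$ we get $|R_n| \le 1 + C/2 \le C$, which closes the induction. It then follows that $|1 - R_n| \le C\varepsilon_n \to 0$ uniformly, which proves the first limit.

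For the second limit, the middle sum in the expression for $\sRel{K_n}{}{q}/q^{n-1}$ is bounded by
\[
	C^2 \sum_{k=2}^{n-2} \binom{n-2}{k-1} r^{k(n-k)-(n-1)} .
\]
For $2 \le k \le n-2$ we have $k(n-k) - (n-1) \ge (\min(k,n-k) - 1)\, n/2 - 1$, using $n \ge 4$. The same geometric-series estimate therefore gives a bound of order $n\,r^{n/2-1} \to 0$, uniformly for $|q| \le r$. Combined with $R_{n-1} \to 1$, this yields the limit $2$. Both estimates depend only on $r$, so the convergence is uniform on compact subsets of $\mathbb{D}$.
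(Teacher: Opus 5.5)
Your argument is essentially the paper's proof. It uses the same two recursions, the same strong induction giving $|R_n|\le 1+C\varepsilon_n\le C$ once $\varepsilon_n\le 1/2$ and $C\ge 2$, and the same splitting off of the $k=1$ and $k=n-1$ terms. The only real difference is how you show the auxiliary sums tend to $0$: you use $\binom{n}{j}\le n^j$ and a geometric series, whereas the paper re-indexes and uses $(1+\rho^{n/2})^n-1\le \exp(n\rho^{n/2})-1$.

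One inequality needs fixing. The claim $k(n-k)-(n-1)\ge(\min(k,n-k)-1)\,n/2-1$ is false near $k=n/2$; for $n=6$, $k=3$ it reads $4\ge 5$. Use instead the exact identity $k(n-k)-(n-1)=(k-1)(n-k-1)\ge(\min(k,n-k)-1)(n-2)/2$, which gives the same $O(n\,r^{n/2-1})$ bound you state.
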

In the statement of this lemma we omit a specific choice of terminals for the split reliability polynomial, which of course can be taken as any distinct pair. Note that for $q \in [0,1]$, the value $\Rel{K_n}{q}$ equals the probability that the Erd\H{o}s--R\'{e}nyi--Gilbert random graph $G(n,1-q)$ is connected. This model was introduced by Gilbert~\cite{gilbert1959random}, who proved that $\Rel{K_n}{q} = 1 - nq^{n-1} + O(n^2 q^{3n/2})$.

\subsection{Recursive identities}\label{sec:recursions}
Conditioning on the vertex set $S$ of the component of a fixed vertex in the random subgraph of $K_n$ (where each edge fails independently with probability $q$), and noting that $K_n$ is disconnected precisely when $|S| < n$, we obtain the following recursion, which appeared in \cite[Eq.~(4)]{gilbert1959random}:
\begin{equation}\label{eq:Rn-rec2}
	\Rel{K_n}{q} = 1 - \sum_{s=1}^{n-1} \binom{n-1}{s-1} \Rel{K_s}{q}\, q^{s(n-s)}.
\end{equation}
Indeed, there are $\binom{n-1}{s-1}$ choices for $S$ with $|S| = s$, the induced subgraph $K_n[S] \cong K_s$ must be connected, and all $s(n-s)$ edges between $S$ and $V \setminus S$ must fail.

Similarly, conditioning on the vertex set $S$ of the component of one terminal and noting that $K_n$ is split precisely when both $S$ and its complement induce connected subgraphs, we obtain:
\begin{equation}\label{eq:Sn-rec}
	\sRel{K_n}{}{q} = \sum_{s=1}^{n-1} \binom{n-2}{s-1} \Rel{K_s}{q}\, \Rel{K_{n-s}}{q}\, q^{s(n-s)}.
\end{equation}
Here there are $\binom{n-2}{s-1}$ choices for $S$ with $|S| = s$ (the other terminal is excluded), $K_s$ and $K_{n-s}$ must each be connected, and all $s(n-s)$ edges between $S$ and $V \setminus S$ must fail. A corresponding formula for $K_n^-$ (the complete graph with the edge between the two terminals removed) appeared in \cite[Eq.~(8)]{brown2017roots}.

\subsection{Proof of \autoref{lem:limKn2}}
The proof of \autoref{lem:limKn2} reduces to showing that the sum $a_n$ defined in~\eqref{eq:anbn} below tends to zero. The sequence $a_n$ also appears in~\cite{gilbert1959random}, where its asymptotic behavior is determined more precisely.
\begin{lemma}\label{lem:anbn}
Let $\rho \in [0,1)$. The sequences
\begin{equation}
\label{eq:anbn}
a_n=\sum_{s=1}^{n-1}\binom{n-1}{s-1}\rho^{\,s(n-s)}
\quad\text{and}\quad
b_n=\sum_{s=2}^{n-2}\binom{n-2}{s-1}\rho^{\,s(n-s)-(n-1)}
\end{equation}
both converge to zero as $n \to \infty$.
\end{lemma}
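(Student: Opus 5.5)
We will bound each sum term by term, using the symmetry $s \leftrightarrow n-s$ of the exponent $s(n-s)$ and the crude estimate $\binom{N}{k} \le N^{k}$. The case $\rho = 0$ is trivial: every exponent is positive, except in the $s=1$ term of $a_n$, which equals $0^{n-1}=0$ for $n\ge 2$. So assume $0<\rho<1$.

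\emph{The sequence $a_n$.} Split the sum at $s \le n/2$ and $s > n/2$. For $s > n/2$, put $t = n-s$, so that $1 \le t < n/2$. Then $\binom{n-1}{s-1} = \binom{n-1}{t} \le n^{t}$ and $s(n-s) = t(n-t) \ge tn/2$. The same bounds hold for $s \le n/2$, since $\binom{n-1}{s-1} \le n^{s-1} \le n^{s}$ and $s(n-s)\ge sn/2$. Hence
\[
a_n \le 2\sum_{t=1}^{\lceil n/2\rceil} \bigl(n\rho^{n/2}\bigr)^{t}.
\]
Because $x_n := n\rho^{n/2} \to 0$, for large $n$ we have $x_n \le 1/2$. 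The geometric series is then at most $2x_n/(1-x_n) \le 4x_n$, which tends to $0$.

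\emph{The sequence $b_n$.} The exponent is $s(n-s)-(n-1) = (s-1)(n-s-1) \ge 0$, since $s(n-s) - (n-1) = (s-1)(n-1-s)$. Set $j=s-1$, which ranges over $1,\dots,n-3$. Then
\[
b_n = \sum_{j=1}^{n-3} \binom{n-2}{j} \rho^{\,j(n-2-j)}.
\]
With $m = n-2$ this is $\sum_{j=1}^{m-1} \binom{m}{j}\rho^{\,j(m-j)}$, which has the same shape as $a_n$. Using the symmetry $j \leftrightarrow m-j$, $\binom{m}{j}\le m^j$ and $j(m-j)\ge jm/2$ for $j\le m/2$ gives
\[
b_n \le 2\sum_{j\ge 1}\bigl(m\rho^{m/2}\bigr)^j \to 0 .
\]

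The only mildly delicate point is the middle range of $s$, where the binomial coefficients are exponentially large. There the exponent $s(n-s)$ is of order $n^2$, and this quadratic decay dominates. The bound $\binom{N}{k}\le N^k$ combined with $t(n-t)\ge tn/2$ handles all ranges uniformly, so I expect no real obstacle beyond checking the index ranges.
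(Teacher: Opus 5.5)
Your proof is correct and follows essentially the paper's route: you reindex $b_n$ as $\sum_{j=1}^{m-1}\binom{m}{j}\rho^{\,j(m-j)}$ with $m=n-2$, use the symmetry $j\leftrightarrow m-j$ together with $j(m-j)\ge jm/2$ on the smaller half, and conclude from $m\rho^{m/2}\to 0$. The differences are cosmetic: the paper sums the half-series by the binomial theorem, getting $2\bigl[(1+\rho^{n/2})^n-1\bigr]\le 2\bigl[\exp(n\rho^{n/2})-1\bigr]$, where you use $\binom{N}{k}\le N^k$ and a geometric series; and the paper obtains $a_n\to 0$ from $a_{n+1}\le \rho^n+b_{n+2}$, whereas you rerun the symmetric estimate on $a_n$ directly.
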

\begin{proof}
	We rewrite $b_{n+2}$ as
	\[
		b_{n+2} = \sum_{s=2}^{n}\binom{n}{s-1}\rho^{\,s(n+2-s)-(n+1)} = \sum_{s=1}^{n-1}\binom{n}{s}\rho^{\,s(n-s)}.
	\]
	The summand is invariant under $s \mapsto n-s$, so
	\[
	\sum_{s=1}^{n-1}\binom{n}{s}\rho^{\,s(n-s)} \leq 2\sum_{s=1}^{\lfloor n/2\rfloor}\binom{n}{s}(\rho^{n/2})^s \leq 2\sum_{s=1}^{n}\binom{n}{s}(\rho^{n/2})^s
	= 2\bigl[(1 + \rho^{n/2})^n - 1\bigr] \leq 2\bigl[\exp\bigl(n \cdot \rho^{n/2}\bigr) - 1\bigr].
	\]
	Since $n\rho^{n/2}\to 0$, the right-hand side converges to zero, so $b_n \to 0$. For $a_n$, note that since $(s+1)(n-s) \ge s(n-s)$,
	\[
	a_{n+1} = \sum_{s=1}^{n}\binom{n}{s-1}\rho^{\,s(n+1-s)} = \sum_{s=0}^{n-1}\binom{n}{s}\rho^{\,(s+1)(n-s)} \leq \rho^n + \sum_{s=1}^{n-1}\binom{n}{s}\rho^{\,s(n-s)} = \rho^n + b_{n+2} \to 0. \qedhere
	\]
\end{proof}
\begin{proof}[Proof of \autoref{lem:limKn2}]
Let $\rho \in (0,1)$ and let $a_n$, $b_n$ be as in \eqref{eq:anbn}. We establish uniform convergence on the closed disk of radius $\rho$, which we denote by $B_\rho$. Let $N$ be sufficiently large such that $a_n \leq \frac{1}{2}$ for $n \geq N$, and let
	\[
		M = \max_{q \in B_\rho}\bigl(2,\, |\Rel{K_1}{q}|,\, \dots,\, |\Rel{K_N}{q}|\bigr).
	\]
	Inductively we have $|\Rel{K_n}{q}| \leq M$ for all $n$ and $q\in B_\rho$, since for $n > N$ \autoref{eq:Rn-rec2} gives
	\[
		|\Rel{K_n}{q}| \leq 1 + a_n \cdot M \leq 1 + \frac{1}{2} M \leq M.
	\]
	Therefore $|1 - \Rel{K_n}{q}| \leq a_n \cdot M \to 0$ as $n \to \infty$. This proves the limit statement for $\Rel{K_n}{q}$. 
	
	We use \autoref{eq:Sn-rec} to see that
\[
	\sRel{K_n}{}{q} / q^{n-1} = 2\Rel{K_{n-1}}{q} +  \sum_{s=2}^{n-2} \binom{n-2}{s-1} \Rel{K_s}{q}\, \Rel{K_{n-s}}{q}\, q^{s(n-s)-(n-1)}.
\]
Because $\Rel{K_{n-1}}{q} \to 1$, the first term converges to $2$ uniformly. We see that the remaining sum is bounded above by
\[
	\left| \sum_{s=2}^{n-2} \binom{n-2}{s-1} \Rel{K_s}{q}\, \Rel{K_{n-s}}{q}\, q^{s(n-s)-(n-1)}\right| \leq M^2 \cdot b_n,
\]
which converges to zero as $n \to \infty$.
\end{proof}
It was remarked in \cite{brown2026realreliabilityrootsgraphs} that the negative real roots of a subsequence of the complete graphs computationally appear to accumulate at $-1$. We can now prove this.
\begin{corollary}
	Let $q \in (-1,0)$. For any sufficiently large $n \equiv 0$ or $3 \pmod{4}$, the polynomial $\Rel{K_n}{\cdot}$ has a zero in the interval $(-1,q)$.
\end{corollary}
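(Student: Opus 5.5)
The plan is to use the intermediate value theorem on the real interval $[-1,q]$. We will show that $\Rel{K_n}{q}>0$ and $\Rel{K_n}{-1}<0$ for all sufficiently large $n \equiv 0,3 \pmod 4$.

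\textbf{Positivity at $q$.} This follows directly from \autoref{lem:limKn2}. Since $|q|<1$, we have $\Rel{K_n}{q}\to 1$, and $\Rel{K_n}{q}$ is real for real $q$. Hence $\Rel{K_n}{q}>0$ for all $n$ beyond some $N(q)$, with no restriction on $n$ modulo $4$.

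\textbf{Sign at $-1$.} This is the main step, and it is where the condition modulo $4$ must enter. I will compute $\Rel{K_n}{-1}$ exactly using the exponential formula. Write $p=1-q$. Expanding $1=(p+q)^{\binom{n}{2}}$ over edge subsets and grouping by the partition into components gives
\[
\sum_{n\ge 0} \Bigl(1+\tfrac{p}{q}\Bigr)^{\binom n2}\frac{x^n}{n!}=\exp\Bigl(\sum_{n\ge1} c_n(q)\,\frac{x^n}{n!}\Bigr),
\qquad \Rel{K_n}{q}=q^{\binom n2}c_n(q),
\]
where $c_n(q)$ is the sum over connected spanning edge sets $S$ of $(p/q)^{|S|}$. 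Equivalently, this is the identity behind \eqref{eq:Rn-rec2}, and it can be checked against that recursion.

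At $q=-1$ we have $p/q=-2$ and $1+p/q=-1$. The sign $(-1)^{\binom n2}$ follows the period-$4$ pattern $+,+,-,-$, so the left-hand side equals $\cos x+\sin x=\sqrt2\sin(x+\pi/4)$. Therefore
\[
\sum_{n\ge1} c_n(-1)\frac{x^n}{n!}=\log\bigl(\cos x+\sin x\bigr),
\qquad \frac{d}{dx}\log(\cos x+\sin x)=\cot\Bigl(x+\frac{\pi}{4}\Bigr)=\frac{1-\tan x}{1+\tan x}.
\]
This yields
\[
\Rel{K_n}{-1}=(-1)^{\binom n2}c_n(-1),
\]
where $c_n(-1)$ is the $(n-1)$-st Taylor coefficient (times $(n-1)!$) of $\cot(x+\pi/4)$ at $0$.

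To read off the sign of these coefficients for large $n$, I will use singularity analysis. The function $\cot(x+\pi/4)$ is meromorphic, and its poles nearest the origin are simple poles at $x=-\pi/4$ (with residue $1$) and $x=3\pi/4$. The pole at $-\pi/4$ dominates, so
\[
c_n(-1)\sim -(n-1)!\,\Bigl(-\frac{4}{\pi}\Bigr)^{n},
\]
up to an exponentially smaller correction coming from the pole at $3\pi/4$. In particular the sign of $c_n(-1)$ is $(-1)^{n+1}$ for large $n$. Combining this with the factor $(-1)^{\binom n2}$ gives a sign of period $4$. I expect it to come out negative exactly when $n\equiv 0,3 \pmod 4$. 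For $n\equiv 0$ the signs are $(+)\cdot(-)$, and for $n\equiv 3$ they are $(-)\cdot(+)$, so both give a negative value, which matches the statement.

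The main obstacle is getting the signs in this dominant-pole computation exactly right. To guard against sign slips, I will check the identity numerically for small $n$ using \eqref{eq:Rn-rec2} at $q=-1$, which reads
\[
R_n=1-\sum_{s=1}^{n-1}\binom{n-1}{s-1}R_s(-1)^{s(n-s)}.
\]

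\textbf{Conclusion.} For $n\equiv 0,3\pmod 4$ large enough, we then have $\Rel{K_n}{-1}<0<\Rel{K_n}{q}$. By the intermediate value theorem, $\Rel{K_n}{\cdot}$ has a zero in $(-1,q)$.
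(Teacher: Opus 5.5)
Your proof is correct. The positivity-at-$q$ half is the same as the paper's; the difference is in how you obtain the sign at $-1$.

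The paper does no computation there. It invokes the general theorem \cite[Thm.~3.1]{brown2020rational}: every connected simple graph with $n$ vertices and $m$ edges has $\Rel{G}{-1}\neq 0$ with sign $(-1)^{m-n+1}$. For $K_n$ this gives $(-1)^{\binom n2-n+1}$ for every $n$ at once.

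You instead derive the value for $K_n$ from the exponential formula, and the computation checks out:
\begin{itemize}
\item $1+p/q=-1$ turns the left side into $\cos x+\sin x$, and the derivative of its logarithm is $\cot(x+\pi/4)$.
\item The dominant pole at $-\pi/4$ gives $c_n(-1)$ the sign $(-1)^{n-1}$ for large $n$.
\item Combining this with $(-1)^{\binom n2}$ gives negativity exactly for $n\equiv 0,3 \pmod 4$. This is consistent with your recursion at $q=-1$, which gives $R_3=-4$ and $R_4=-16$.
\end{itemize}

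Your route is self-contained, needing only the lemma on $\Rel{K_n}{q}\to 1$, and it yields an explicit value with its growth rate. The cost is length, and it applies only to complete graphs, whereas the paper's cited theorem covers all simple graphs.

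You can also drop the singularity analysis and the hedging about signs. Since $\cot(x+\pi/4)=\tan(\pi/4-x)=\sec 2x-\tan 2x$, you get exactly $c_n(-1)=(-1)^{n-1}2^{n-1}E_{n-1}$, where $E_k>0$ are the Euler zigzag numbers $1,1,1,2,5,16,61,\dots$. Hence $\Rel{K_n}{-1}=(-1)^{\binom n2+n-1}2^{n-1}E_{n-1}$ for every $n$. This recovers the case $G=K_n$ of the cited theorem with no error term to control.
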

\begin{proof}
	It is proven in \cite[Thm.~3.1]{brown2020rational} that for any simple graph with $n$ vertices and $m$ edges the value of $\Rel{G}{-1}$ is nonzero and its sign is $(-1)^{m-n+1}$. It follows that the sign of $\Rel{K_n}{-1}$ is $(-1)^{\binom{n}{2}-n+1}$, which is negative for $n \equiv 0$ or $3 \pmod{4}$. By \autoref{lem:limKn2}, $\Rel{K_n}{q} > 0$ for all sufficiently large $n$, so the intermediate value theorem gives a zero in $(-1,q)$.
\end{proof}

\section{Proof of the main theorem}
Recall from \autoref{sec:strategy} that $C_m[K_{n+1}]$ denotes the graph obtained by replacing each edge of the cycle $C_m$ with a copy of $K_{n+1}$. This graph is simple and connected for $m \geq 3$ and $n \geq 1$. Since a spanning subgraph of $C_m[K_{n+1}]$ is connected if and only if every copy of $K_{n+1}$ connects its terminals, or exactly one copy splits while all others connect, we have
\begin{equation}\label{eq:rel-cycle}
	\Rel{C_m[K_{n+1}]}{q} = \Rel{K_{n+1}}{q}^{m-1}\bigl(\Rel{K_{n+1}}{q} + m \cdot \sRel{K_{n+1}}{}{q}\bigr).
\end{equation}
In particular, every zero of $\Rel{K_{n+1}}{\cdot} + m \cdot \sRel{K_{n+1}}{}{\cdot}$ is a reliability root of the simple graph $C_m[K_{n+1}]$. Density of zeros will now follow from the following lemma.

\begin{lemma}
	\label{lem:rootslemma}
	Let $\{f_n\}_{n \geq 1}, \{g_n\}_{n \geq 1}$ be sequences of holomorphic functions on the unit disk $\mathbb{D}$, and let $c \in \mathbb{C} \setminus \{0\}$. Suppose
	the following limits hold for $z \in \mathbb{D}$:
	\[
		\lim_{n \to \infty} f_n(z)  = 1
		\quad\text{ and }\quad \lim_{n \to \infty} g_n(z) / z^n = c,
	\]
	and the convergence is uniform on compact subsets of $\mathbb{D}$. Then the zero set
	\[
		\mathcal{Z} := \{z \in \mathbb{D}: f_n(z) + m \cdot g_n(z) = 0 \text{ for some integers $n \geq 1$ and $m \geq 3$} \}
	\]
	is dense in $\mathbb{D}$. Moreover, if $f_n$ and $g_n$ are real-valued on $(-1,0)$ then $\mathcal{Z} \cap (-1,0)$ is dense in $[-1,0]$.	
\end{lemma}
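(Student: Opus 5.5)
Fix a target $z_0 \in \mathbb{D}\setminus\{0\}$ (density of $\mathcal{Z}$ in $\mathbb{D}$ is unaffected by the single point $0$) and $\varepsilon>0$ small enough that the closed disk $B=\overline{B(z_0,\varepsilon)}$ lies in $\mathbb{D}\setminus\{0\}$. The plan is to compare $f_n + m g_n$ on $B$ with the model function $h_{n,m}(z) = 1 + m c z^n$, whose zeros are explicit, and to transfer zeros with Rouch\'e's theorem. Write $g_n(z) = z^n(c + \delta_n(z))$ and $f_n = 1+\eta_n$, where $\delta_n,\eta_n \to 0$ uniformly on $B$. Then
\[
f_n + m g_n - h_{n,m} = \eta_n + m z^n \delta_n .
\]
The zeros of $h_{n,m}$ are the points $z$ with $z^n = -1/(mc)$, so they have modulus $r_{n,m} = (m|c|)^{-1/n}$ and are spaced at angle $2\pi/n$.

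The key step is choosing $m$ as a function of $n$ so that $r_{n,m}\approx |z_0|$. Set $m = m(n) = \lceil |z_0|^{-n}/|c| \rceil$. Then $m|c||z_0|^n \in [1, 1+|c||z_0|^n]$, hence $r_{n,m}\to|z_0|$, and $m\ge 3$ for $n$ large. Because the angular spacing $2\pi/n\to 0$, for large $n$ some zero $w_n$ of $h_{n,m}$ satisfies $|w_n - z_0| < \varepsilon/2$.

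The main obstacle is verifying the Rouch\'e inequality on a small circle $\gamma$ around $w_n$. The difficulty is that $|h_{n,m}|$ varies enormously near $w_n$ on the scale $1/n$. To handle this, I would take $\gamma = \{|z-w_n| = t/n\}$ with a fixed small $t$. Writing $z = w_n(1+u/n)$, we get $m c z^n \approx -e^{u}$, so
\[
|h_{n,m}(z)| \approx |1 - e^{u}| \ge \kappa(t) > 0 \quad\text{on } \gamma,
\]
uniformly in $n$, while $|m z^n| \le C(t)$ is bounded there. The error then satisfies $|\eta_n| + |m z^n \delta_n| \le \sup_B|\eta_n| + C(t)\sup_B|\delta_n| \to 0$, which is smaller than $\kappa(t)$ for large $n$. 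Rouch\'e's theorem then gives a zero of $f_n + m g_n$ inside $\gamma$, and this zero lies within $\varepsilon$ of $z_0$. This proves the density claim.

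For the real statement, take $z_0 \in (-1,0)$ with $f_n,g_n$ real there. I would avoid Rouch\'e entirely and use the intermediate value theorem: choose $n$ of the appropriate parity so that the model zero $w_n$ is real and negative. The condition is $w_n^n = -1/(mc)$; for real $c$, pick $n$ odd or even according to the sign of $c$. Note $c$ must be real, since it is the limit of the real numbers $g_n(z)/z^n$ for $z\in(-1,0)$. Evaluating at $z = w_n(1\pm t/n)$ makes $1 + m c z^n \approx 1 - e^{\pm t}$, which takes opposite signs at the two points, and the uniform error from above is smaller than $\min|1-e^{\pm t}|$. Hence $f_n + m g_n$ changes sign between these two real points near $z_0$ and has a real zero there. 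Density in $[-1,0]$ follows because the closure of $(-1,0)$ is $[-1,0]$.
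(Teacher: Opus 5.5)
Your proposal is correct and follows essentially the same route as the paper. Both arguments compare $f_n + m_n g_n$ with the model $1 + m_n c z^n$, where $m_n \approx 1/(|c|\,|z_0|^n)$, apply Rouch\'e on a circle of radius of order $1/n$ around a model zero, and handle the real case by parity plus the intermediate value theorem at $w_n(1\pm t/n)$. The remaining differences are cosmetic: ceiling versus floor in $m_n$, a general small $t$ versus the paper's relative radius $1/n$, and the paper's explicit bound $|1-(1+u)^n|\ge 3-e$ where you use the uniform limit $1-e^{u}$.
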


\begin{proof}
	Pick an arbitrary $z_0 \in \mathbb{D} \setminus \{0\}$. We show that elements of $\mathcal{Z}$ accumulate on $z_0$.
	Write $\rho = |z_0|$ and define the integers $m_n = \lfloor 1/(|c| \cdot \rho^n) \rfloor$. Note that $m_n \to \infty$ and in particular $m_n \geq 3$ for all sufficiently large $n$.
	Define the sequence of polynomials $p_n(z) = 1 + m_n c z^n$. The zeros of $p_n$ are $n$ equally spaced points on a circle of radius $\rho_n := (m_n |c|)^{-1/n}$.
	Since $\rho_n \to \rho$, there is a sequence $z_n$ of zeros of $p_n$ with $z_n \to z_0$. It now suffices to find $z_n^*$ with $f_n(z_n^*) + m_n g_n(z_n^*) = 0$ and $|z_n - z_n^*| \to 0$.
	
	Define the closed disks $B_n = \{z_n (1 + u) : |u| \leq 1/n \}$. The relation $m_n c z_n^n = -1$ gives $p_n(z_n (1 + u)) = 1 - (1+u)^n$. For $|u| = 1/n$,
	\[
		|(1+u)^n - 1| = \bigl| nu + \textstyle\sum_{k=2}^n \binom{n}{k}u^k\bigr| \geq 1 - \textstyle\sum_{k=2}^n \binom{n}{k}\bigl(\tfrac{1}{n}\bigr)^k = 1 - \bigl((1+\tfrac{1}{n})^n-2\bigr) \geq 3 - e.
	\]
	It follows that for $z \in \partial B_n$ we have the uniform lower bound $|p_n(z)| \geq 3-e$. 
	
	For $n$ sufficiently large the sets $B_n$ lie inside some strictly smaller closed disk $B \subset \mathbb{D}$. The uniform convergence on $B$ gives bounds
	$|f_n(z) - 1| < \alpha_n$ and $|g_n(z) / z^n  - c| < \beta_n$ with $\alpha_n, \beta_n \to 0$. For $z \in \partial B_n$, writing $z = z_n(1+u)$ with $|u| = 1/n$ gives $m_n |z|^n = |1+u|^n/|c| \leq e/|c|$, so
	\[
		|f_n(z) + m_n g_n(z)  - p_n(z)| \leq |f_n(z) - 1| + m_n |z|^n \cdot |g_n(z) / z^n  - c| \leq \alpha_n + \tfrac{e}{|c|} \beta_n.
	\]
	For sufficiently large $n$ this is less than $3 - e$, so by Rouch\'{e}'s theorem $f_n + m_n g_n$ has a zero inside $B_n$. This completes the proof of the first statement.

	For the moreover statement, note that $c$ is necessarily real since $g_n(x)/x^n \to c$ for real $x$. Now let $z_0 \in (-1,0)$ and define $\rho, m_n, p_n$ as above. For infinitely many $n$ (odd $n$ if $c > 0$, even $n$ if $c < 0$), $p_n$ has a real zero $z_n = -(m_n|c|)^{-1/n} \to z_0$ in $(-1,0)$. At the real endpoints $z_n(1 \pm 1/n)$ of $B_n \cap \mathbb{R}$ the values $p_n(z_n(1 \pm 1/n)) = 1 - (1 \pm 1/n)^n$ converge to $1 - e$ and $1 - 1/e$ respectively, which have opposite signs. Since the error $|f_n + m_n g_n - p_n| \to 0$ uniformly, the function $f_n + m_n g_n$ also changes sign on this interval for large $n$, and the intermediate value theorem gives a real zero.
\end{proof}

We can now prove the main result, which we restate for convenience.
\maintheorem*
\begin{proof}
	By~\eqref{eq:rel-cycle}, the zeros of $\Rel{K_{n+1}}{\cdot} + m \cdot \sRel{K_{n+1}}{}{\cdot}$ are reliability roots of the simple graph $C_m[K_{n+1}]$. \autoref{lem:rootslemma} applied with $f_n(q) = \Rel{K_{n+1}}{q}$ and $g_n(q) = \sRel{K_{n+1}}{}{q}$ shows that these roots are dense in $\mathbb{D}$. The moreover statement of \autoref{lem:rootslemma} gives density of real reliability roots in $[-1,0]$. The closure $\overline{\mathcal{Z}_{\mathrm{simple}} \cap \mathbb{R}} = [-1,0] \cup \{1\}$ then follows from \cite[Cor.~3.2]{brown1992roots}, which states that the real zeros of the reliability polynomial of any connected multigraph are contained in $[-1,0) \cup \{1\}$.
\end{proof}

\begin{remark}
	The original proof of~\cite[Prop.~5.1]{brown1992roots} uses the family $C_m[K_2^{\parallel n}]$ to prove density of roots of multigraphs; see \autoref{fig:constructions}. This proof can also be recovered from \autoref{lem:rootslemma}, since $\Rel{K_2^{\parallel n}}{q} = 1 - q^n$ and $\sRel{K_2^{\parallel n}}{}{q} = q^n$ clearly satisfy the asymptotic conditions.
\end{remark}

\section{Open problems}
We conclude with two related open problems.
\subsection{Zeros of the complete graph}
As this paper shows, constructions involving complete graphs are useful to find examples of graphs with roots anywhere inside the unit disk. However, computations suggest that the complete graphs themselves have no reliability roots outside the unit disk; see \autoref{fig:roots-K25}. This leads to the following question.
\begin{question}\label{q:Kn-roots}
	Suppose $\Rel{K_n}{q} = 0$ for some $n$ and $q \neq 1$. Does it follow that $|q| < 1$?
\end{question}
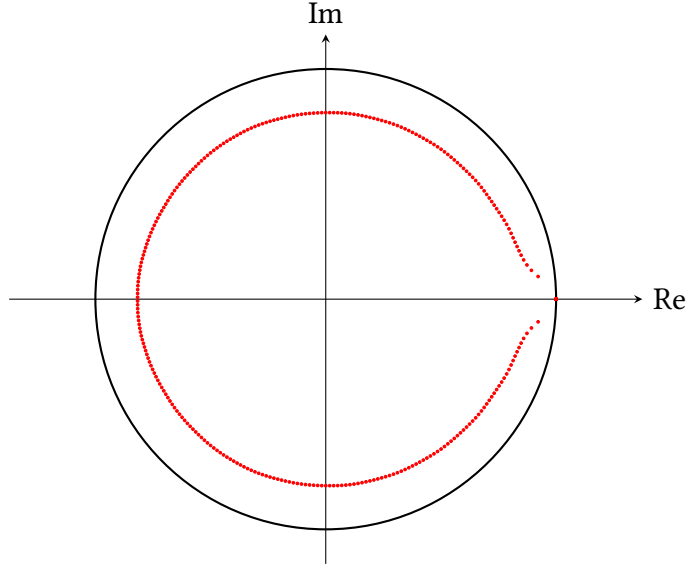
\begin{figure}[ht]
\centering
\begin{tikzpicture}
  \begin{axis}[
    axis equal,
    width=10cm,
    xmin=-1.15, xmax=1.15,
    ymin=-1.15, ymax=1.15,
    axis lines=middle,
    xlabel={$\operatorname{Re}$},
    ylabel={$\operatorname{Im}$},
    every axis x label/.style={at={(ticklabel* cs:1)}, anchor=west},
    every axis y label/.style={at={(ticklabel* cs:1)}, anchor=south},
    xtick=\empty,
    ytick=\empty,
  ]
    \draw[black, thick] (axis cs:0,0) circle[radius=1];
    \addplot[only marks, mark=*, mark size=0.5pt, red] table[col sep=comma, header=false] {roots_K25.csv};
  \end{axis}
\end{tikzpicture}
\caption{The zeros of $\Rel{K_{25}}{q}$ in the complex plane, together with the unit circle.}
\label{fig:roots-K25}
\end{figure}
Note that \autoref{lem:limKn2} implies that for any $\rho < 1$, the closed disk of radius $\rho$ is zero-free for all sufficiently large $n$. A positive answer to \autoref{q:Kn-roots} would therefore mean that the reliability roots of $K_n$ approach the unit circle as $n \to \infty$. Recall that the \emph{zero-counting measure} of a polynomial $p$ of degree $d$ with zeros $z_1, \ldots, z_d$ (counted with multiplicity) is $\mu_p = \frac{1}{d} \sum_{i=1}^{d} \delta_{z_i}$. It is natural to ask whether the zero-counting measures of $\Rel{K_n}{\cdot}$ converge weakly to the uniform measure on the unit circle.

\subsection{Closure of simple versus multigraph roots}
The zero sets $\mathcal{Z}_{\mathrm{simple}}$ and $\mathcal{Z}_{\mathrm{multi}}$ are not equal: for instance, $q = -1$ is a root of $\Rel{K_2^{\parallel n}}{q} = 1 - q^n$ for every even $n$, but $\Rel{G}{-1} \neq 0$ for every simple graph $G$~\cite{brown2020rational}. However, \autoref{thm:main} together with~\cite[Cor.~3.2]{brown1992roots} shows that the closures of their real parts coincide: $\overline{\mathcal{Z}_{\mathrm{simple}} \cap \mathbb{R}} = \overline{\mathcal{Z}_{\mathrm{multi}} \cap \mathbb{R}} = [-1,0] \cup \{1\}$.
\begin{question}
	Does the closure of the reliability roots of simple graphs equal the closure of the reliability roots of multigraphs, i.e.\ is $\overline{\mathcal{Z}_{\mathrm{simple}}} = \overline{\mathcal{Z}_{\mathrm{multi}}}$?
\end{question}
We note that the answer is yes if $\mathcal{Z}_{\mathrm{multi}}$ is unbounded. It is shown in~\cite[Prop.~1.3]{bencs2025zeros} that either $\mathcal{Z}_{\mathrm{multi}}$ is bounded or $\overline{\mathcal{Z}_{\mathrm{multi}}} = \mathbb{C}$; it remains open which of the two holds. Subdividing every edge of a multigraph $G$ yields a simple graph $G[P_3]$ whose reliability satisfies (cf.\ \autoref{sec:splitrel} or~\cite[Sec.~5]{brown1992roots})
\[
	\Rel{G[P_3]}{q} = (1-q^2)^{|E(G)|} \cdot \operatorname{Rel}\!\left(G;\, \frac{2q}{1+q}\right).
\]
Thus if $q_0$ is a reliability root of a multigraph, then $q_0/(2-q_0)$ is a reliability root of a simple graph. Since $q_0 \mapsto q_0/(2-q_0)$ is a M\"{o}bius transformation, it maps any dense subset of $\mathbb{C}$ to a dense subset, giving $\overline{\mathcal{Z}_{\mathrm{simple}}} = \mathbb{C} = \overline{\mathcal{Z}_{\mathrm{multi}}}$.

\section*{Acknowledgements}
The author thanks Ferenc Bencs for useful discussions.

\bibliographystyle{alpha}
\bibliography{references}

\end{document}